\documentclass[a4paper, reqno]{amsart}

\usepackage[T1]{fontenc}
\usepackage{lmodern, microtype, amssymb}
\usepackage[scr=dutchcal]{mathalpha}
\usepackage[dvipsnames]{xcolor}
\usepackage[colorlinks=true, citecolor=RoyalBlue, linkcolor=black, urlcolor=RoyalBlue]{hyperref}
\usepackage{tikz-cd}
\usepackage{changepage}

\microtypecontext{spacing=nonfrench}
\microtypesetup{stretch=20, shrink=15, spacing=true}
\tikzcdset{arrow style=tikz, diagrams={>={Computer Modern Rightarrow[length=5pt,width=4pt]}}}
\newtheorem{theorem}{Theorem}[section]
\newtheorem*{main-theorem}{Main theorem}
\newtheorem{proposition}[theorem]{Proposition}

\newtheorem{lemma}[theorem]{Lemma}

\theoremstyle{definition}
\newtheorem*{lubin-conjecture}{Lubin's conjecture}

\theoremstyle{remark}
\newtheorem{remark}[theorem]{Remark}

\DeclareMathOperator\End{End}

\DeclareMathOperator\Fil{Fil}

\DeclareMathOperator\Gal{Gal}

\DeclareMathOperator\Log{Log}

\DeclareMathOperator\T{T}

\DeclareMathOperator\wideg{wideg}

\def\Bcris{B_\mathrm{cris}}
\def\BdR{B_\mathrm{dR}}

\title{Lubin's conjecture for height-one $p$-adic dynamical systems}
\author{Martin Debaisieux}
\address{Département de Mathématique, Université de Mons, 7000 Mons, Belgium}
\email{martin.debaisieux@umons.ac.be}
\thanks{The author was supported by the FRIA of the Fonds de la Recherche Scientifique -- FNRS}
\subjclass{\texttt{11S82}, \texttt{11S31}, \texttt{37P20}; \texttt{14L05}, \texttt{11F80}, \texttt{11F85}}
\date{\today}
\keywords{$p$-adic dynamical system, $p$-adic Hodge theory, formal group}

\begin{document}

\begin{adjustwidth}{2mm}{2mm}
    \begin{abstract}
        We prove Lubin's conjecture for height-one commuting pairs of formal power series defined over the ring of integers $\mathcal{O}$ of any finite extension of $\mathbb{Q}_p$. Using some $p$-adic Hodge theory, we show that such a pair is a pair of endomorphisms of a formal group defined over $\mathcal{O}$.
    \end{abstract}
\end{adjustwidth}

\maketitle

\section*{Introduction}

\noindent In \cite{Lub94}, Lubin studied pairs of commuting power series defined over the ring of integers of a finite extension of $\mathbb{Q}_p$ with a fixed point at $0$. Such pairs give rise to dynamical systems on the $p$-adic open unit disk and are known as $p$-adic dynamical systems. He noticed that ‘‘for an invertible series to commute with a noninvertible series, there must be a formal group somehow in the background''. Various results in this direction were proved in the subsequent years, see for instance \cite{Li96a}, \cite{Li96b}, \cite{Li02a}, \cite{Li02b}, \cite{Sar05}, and the conjecture was formulated more precisely to what we call in this paper Lubin's conjecture.

\begin{lubin-conjecture}[{\cite[p. 131]{Sar05}}]
    Suppose that $f$ and $u$ are a noninvertible and a nontorsion invertible series, respectively, defined over the ring of integers~$\mathcal{O}$ of a finite extension of $\mathbb{Q}_p$. Suppose further that the roots of $f$ and all of its iterates are simple, and that $f'(0)$ is a uniformizer in $\mathbb{Z}_p$. If $f \circ u = u \circ f$, then $f, u \in \End_\mathcal{O}(F)$ for some formal group $F$ over $\mathcal{O}$.
\end{lubin-conjecture}

Under the hypotheses of Lubin's conjecture and assuming that the Weierstrass degree $\wideg(f)$ of $f$ is finite, Lubin showed in \cite[Main Theorem 6.3]{Lub94} that the latter is of the form $p^h$ for an integer $h \geqslant 1$ called the height of the pair $(f, u)$. The conjecture was first proved in the case of height $1$ over $\mathbb{Z}_p$ by Specter in \cite{Spe18},~then for all finite height over $\mathbb{Z}_p$ by Berger in \cite{Ber19}. In \cite{Deb26}, we solve the cases of height $1$ over extensions of $\mathbb{Q}_p$ whose ramification index is relatively prime to~$p^2-p$, provided that the linear coefficient of $u$ lies in $\mathbb{Z}_p$. The proof is divided into three main steps:
\begin{enumerate}
    \item[(1)] Studying the dynamical systems induced by $f$ and $u$ in order to extract a Galois character from the set of $f$-consistent sequences.
    \item[(2)] Showing that this character is crystalline of weight $1$ and use it to define a $\mathbb{Z}_p$-module structure on the set of $f$-consistent sequences for which $f$ is an endomorphism.
    \item[(3)] Applying explicit functors in integral $p$-adic Hodge theory to retrieve the latent formal group.
\end{enumerate}
Only step (1) depends on the ramification assumption, while the constructions in steps (2) and (3) are independent of it. In this article, we remove this assumption and prove all the height-one cases over any finite extension. We introduce a new argument based on a result of \cite{Li96a} to replace the one used in (1), allowing us to repeat the remainder of the constructions from {\it ibidem}. The other cases of Lubin's conjecture are still open to date.

    \subsection*{Notations and main statement} Let $p$ be a prime number. We fix an algebraic closure of $\mathbb{Q}_p$ containing all our extensions and let $\mathbb{C}_p$ be its $p$-adic completion with open unit disk $\mathfrak{m}_{\mathbb{C}_p}\!$. If $E$ is a finite extension of $\mathbb{Q}_p$, we let $\mathcal{O}_E$ be its ring of integers with maximal ideal $\mathfrak{m}_E$. Given a ring $R$, we define $\mathscr{S}_0(R) = XR[\![X]\!]$ to be the set of formal power series over $R$ without constant term. Composition induces a law on $\mathscr{S}_0(R)$ making it into a noncommutative monoid, and a necessary and sufficient condition on $s \in \mathscr{S}_0(R)$ to be invertible is that $s'(0)$ is a unit of $R$.

    \begin{main-theorem}[\ref{main-theorem}]
        Let $K$ be a finite extension of $\mathbb{Q}_p$. Let $(f, u)$ be a commuting pair in $\mathscr{S}_0(\mathcal{O}_K)$ with $f'(0)$ a uniformizer in $\mathbb{Z}_p$ and $u'(0) \in \mathbb{Z}_p^\times$ nontorsion, such that the roots of $f$ and all of its iterates in $\mathfrak{m}_{\mathbb{C}_p}\!$ are simple and $\wideg(f) = p$. Then there exists a formal group $F$ defined over $\mathcal{O}_K$ such that $f, u \in \End_{\mathcal{O}_K}(F)$.
    \end{main-theorem}

    By a formal group, we always mean a one-dimensional commutative formal group law, see \cite{Haz12} for a detailed exposition of these objects and their properties. The assumption that $u'(0) \in \mathbb{Z}_p^\times$ rather than $u'(0) \in \mathcal{O}_K^\times$ is the same as in \cite{Deb26} and allows us to define a character with values in $\mathbb{Z}_p^\times$. It is reminiscent of the following fact. Let $F$ be a height-one formal group defined over the ring of integers $\mathcal{O}$ of a finite extension of $\mathbb{Q}_p$. Its ring of $\mathcal{O}$-endomorphisms $\End_\mathcal{O}(F)$ is a free $\mathbb{Z}_p$-algebra of rank $1$ under the embedding sending $m \in \mathbb{Z}_p$ to $[m]_F$, the multiplication-by-$m$ endomorphism in $F$, which satisfies $[m]'_F(0) = m$. Therefore, the linear coefficient of every $\mathcal{O}$-endomorphism of $F$ lies in $\mathbb{Z}_p$.

    \subsection*{Acknowledgments} The author gratefully acknowledges the support and guidance of his supervisor Maja Volkov, whose valuable advice and encouragement greatly contributed to this work.

\section{The dynamical system}

\noindent Let $K$ be a finite extension of $\mathbb{Q}_p$. Throughout this article, we assume $(f, u)$ to be a commuting pair of formal power series in $\mathscr{S}_0(\mathcal{O}_K)$ with $f'(0)$ a uniformizer in $\mathbb{Z}_p$ and $u'(0) \in \mathbb{Z}_p^\times$ nontorsion, and such that the roots of $f$ and its iterates in $\mathfrak{m}_{\mathbb{C}_p}\!$ are simple and $\wideg(f) = p$. Recall that the Weierstrass degree of $s \in \mathscr{S}_0(\mathcal{O}_K)$ is the $X$-adic valuation of the power series $s(X) \bmod \mathfrak{m}_K$. By the Weierstrass preparation theorem, when $\wideg(s) = d$ is finite, $d$ is the number of roots of $s$ in $\mathfrak{m}_{\mathbb{C}_p}\!$ counting with multiplicities.

    \subsection{Logarithm}\label{S: logarithm} Lubin showed in \cite[\S1]{Lub94} that there exists a unique invertible formal power series
    \[
        \Log_f(X) \in X + X^2K[\![X]\!]
    \]
    satisfying $\Log_f(s(X)) = s'(0)\Log_f(X)$ for all power series $s \in \mathscr{S}_0(\mathcal{O}_K)$ commuting with $f$, called the logarithm of $(f, u)$. This series converges on $\mathfrak{m}_{\mathbb{C}_p}\!$ and its derivative has coefficients in $\mathcal{O}_K$ since the roots of $f$ and its iterates are simple \cite[Lemma 2.2]{Ber19}. If Lubin's conjecture holds, $\Log_f$ is the logarithm of the formal group attached to $(f, u)$. In any case, the formal power series
    \[
        F(X,Y) = \Log_f^{\circ -1}(\Log_f(X) + \Log_f(Y)) \in K[\![X,Y]\!]
    \]
    is the unique formal group over $K$ for which both $f$ and $u$ are endomorphisms. We will show in the last section that $F$ is in fact defined over $\mathcal{O}_K$. For all $m \in \mathbb{Z}_p$, we denote by $[m]_f = [m]_F \in \mathscr{S}_0(K)$ the unique power series over $K$ commuting with $f$ whose linear coefficient is $m$.

    \subsection{Torsion points} When we talk about the roots or the fixed points of a power series, we mean its roots or its fixed points, respectively, in $\mathfrak{m}_{\mathbb{C}_p}\!$. The set of roots of $f$ and its iterates is denoted by
    \[
        \Lambda(f) = \{x \in \mathfrak{m}_{\mathbb{C}_p} \mid  \exists\, n \geqslant 0,\, f^{\circ n}(x) = 0\} = \bigsqcup_{n \geqslant 0} \Lambda_n(f)
    \]
    where $\Lambda_0(f) = \{0\}$ and, for all $n \geqslant 1$, $\Lambda_n(f)$ is the set of roots of $f^{\circ n}$ that are not roots of $f^{\circ n-1}$. Lubin proved in \cite[Proposition 3.2]{Lub94} that $\Lambda(f)$ is exactly the set of periodic points of $u$. We proceed to compute the valuations of these points, which will be needed later, using Newton polygons. To plot such diagrams, we use the $p$-adic valuation $\upsilon_p$ so that $\upsilon_p(p) = 1$.

    \begin{proposition}\label{R: newton-polygon-f}
        For every integer $n \geqslant 1$, the endpoints of the decreasing part of the Newton polygon of $f^{\circ n}$ are the $(p^i, n-i)$ for all integers $i \in \{0,\dots,n\}$.
    \end{proposition}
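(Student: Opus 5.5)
The plan is to read off the Newton polygon of $f^{\circ n}$ from the valuations of its roots. By the Weierstrass preparation theorem, $f^{\circ n}$ has exactly $\wideg(f^{\circ n}) = p^n$ roots in $\mathfrak{m}_{\mathbb{C}_p}$, all of them simple. These roots are $0$ together with $\Lambda_1(f) \sqcup \dots \sqcup \Lambda_n(f)$. I will show by induction that every $x \in \Lambda_i(f)$ satisfies $\upsilon_p(x) = \frac{1}{p^{i-1}(p-1)}$ and that $\#\Lambda_i(f) = p^{i-1}(p-1)$. The decreasing part of the Newton polygon of $f^{\circ n}$ starts at $(1, \upsilon_p((f^{\circ n})'(0))) = (1,n)$, because the root $0$ is simple and $(f^{\circ n})'(0) = f'(0)^n$ has valuation $n$. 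It ends at $(p^n, 0)$, because $\wideg(f^{\circ n}) = p^n$. Its segments have slopes $-\upsilon_p(x)$ for the nonzero roots $x$, with horizontal lengths equal to the number of roots of each valuation. The segment coming from $\Lambda_i(f)$ has horizontal length $p^{i-1}(p-1)$ and vertical drop $1$. Taking the roots of largest valuation first, starting from $(1,n)$, the breakpoints are $(1+(p-1), n-1) = (p, n-1)$, then $(p^2, n-2)$, and so on up to $(p^n, 0)$. Since the slopes $-\frac{1}{p^{i-1}(p-1)}$ are pairwise distinct, each $(p^i, n-i)$ is a genuine vertex, which is the claim.

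The base case is the Newton polygon of $f = \sum a_k X^k$ itself, and this is the step I expect to be the main obstacle. We know that $\upsilon_p(a_1) = 1$ and that $a_p$ is a unit. The difficulty is to exclude coefficients $a_k$ with $2 \le k \le p-1$ lying below the segment joining $(1,1)$ to $(p,0)$. For this I will use the logarithm of \S\ref{S: logarithm}. Differentiating $\Log_f(f(X)) = f'(0)\Log_f(X)$ gives
\[
    \Log_f'(f(X))\, f'(X) = f'(0)\, \Log_f'(X).
\]
Since $\Log_f' \in 1 + X\mathcal{O}_K[\![X]\!]$ is a unit in $\mathcal{O}_K[\![X]\!]$, the series $f'(X)$ equals $f'(0)$ times a unit of $\mathcal{O}_K[\![X]\!]$. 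Hence $k a_k \in f'(0)\mathcal{O}_K$ for all $k$. For $p \nmid k$ this gives $\upsilon_p(a_k) \ge 1$. Every point $(k, \upsilon_p(a_k))$ with $1 < k < p$ therefore lies above the segment, so the decreasing part of the Newton polygon of $f$ is the single segment from $(1,1)$ to $(p,0)$. It follows that $\Lambda_1(f)$ consists of $p-1$ roots, each of valuation $\frac{1}{p-1}$.

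For the inductive step, an element $x \in \Lambda_{i}(f)$ with $i \ge 2$ is a root of $f(X) - y$ for some $y \in \Lambda_{i-1}(f)$. By the induction hypothesis, $\upsilon_p(y) = \frac{1}{p^{i-2}(p-1)} \le \frac{1}{p-1}$. The Newton polygon of $f(X) - y$ is the lower convex hull of $(0,\upsilon_p(y))$ and the points of $f$. By the base case, the points $(k, \upsilon_p(a_k))$ for $1 \le k < p$ lie on or above the line from $(0,\upsilon_p(y))$ to $(p,0)$, since that line has height $\ge \upsilon_p(y)(1-k/p)$ and $\upsilon_p(y) \le \frac{1}{p-1} \le \frac{p}{p-1}$. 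So the polygon up to $X^p$ is the single segment from $(0,\upsilon_p(y))$ to $(p,0)$. Thus $f(X) = y$ has exactly $p$ roots in $\mathfrak{m}_{\mathbb{C}_p}$, all of valuation $\upsilon_p(y)/p = \frac{1}{p^{i-1}(p-1)}$. These roots are distinct for distinct $y$, and all of them lie in $\Lambda_i(f)$. This gives $\#\Lambda_i(f) = p\cdot p^{i-2}(p-1)$ and completes the induction. Alternatively, the same identity applied to $\Log_f(f^{\circ n}) = f'(0)^n \Log_f$ gives the lower bound $\upsilon_p(a_k) \ge n - \upsilon_p(k)$ on the coefficients of $f^{\circ n}$ directly, which could serve as a cross-check.
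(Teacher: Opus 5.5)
Your proof is correct, but it is organized differently from the paper's.

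\textbf{Base case.} The paper simply invokes \cite[Theorem 4.1]{Li96a}. You instead derive the shape of the Newton polygon of $f$ from the integrality of $\Log_f'$, which is already recorded in \S\ref{S: logarithm}. Differentiating $\Log_f\circ f=f'(0)\Log_f$ gives $f'(X)\in f'(0)\,\mathcal{O}_K[\![X]\!]^\times$, hence $\upsilon_p(a_k)\geqslant 1$ for $p\nmid k$. This is stronger than the polygon requires, and it keeps the argument internal to the paper, modulo \cite[Lemma 2.2]{Ber19}.

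\textbf{Induction.} The two arguments peel off opposite ends of the iterate.
\begin{itemize}
\item The paper peels off the outermost $f$. It realizes $\Lambda_{n+1}(f)$ as the roots of $f^{\circ n}(X)=x$ with $x\in\Lambda_1(f)$, and it uses the inductive Newton polygon of $f^{\circ n}$ directly.
\item You peel off the innermost $f$, solving $f(X)=y$ with $y\in\Lambda_{i-1}(f)$. Only the polygon of $f$ itself is ever needed. You induct on the valuations and cardinalities of the sets $\Lambda_i(f)$, and you assemble the polygon of $f^{\circ n}$ only at the end.
\end{itemize}
Given Li's theorem, the paper's route is shorter. Yours is more elementary and self-contained.

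\textbf{Two small points to tidy up.}
\begin{itemize}
\item Your sentence comparing the line through $(0,\upsilon_p(y))$ and $(p,0)$ with the points of $f$ is garbled. It suffices to note that $\upsilon_p(a_k)\geqslant 1\geqslant\upsilon_p(y)>\upsilon_p(y)(1-k/p)$ for $1\leqslant k<p$. The middle inequality can be an equality when $p=2$, so write $\geqslant$ there rather than $>$.
\item The count $\#\Lambda_i(f)=p\,\#\Lambda_{i-1}(f)$ uses that the $p$ roots of $f(X)-y$ are distinct. This follows from the simplicity of the roots of $f^{\circ i}$. Alternatively, you can read off $\#\Lambda_i(f)=p^i-p^{i-1}$ directly from simplicity and $\wideg(f^{\circ i})=p^i$.
\end{itemize}
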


    \begin{proof}
        The proof is by induction on $n$. The base case follows from the hypotheses on $f$ and \cite[Theorem 4.1]{Li96a}. Assume now that the result holds for $n \geqslant 1$. Since the linear coefficient and the Weierstrass degree are multiplicative, $(1, n+1)$ and $(p^{n+1}, 0)$ are vertices of the Newton polygon of $f^{\circ n+1}$. The solutions of the equations $f^{\circ n}(X) - x = 0$ where $x \in \Lambda_1(f)$ exhaust $\Lambda_{n+1}(f)$. Given $x \in \Lambda_1(f)$, the Newton polygon of $f^{\circ n} - x$ has a single line of slope $-1/(p^{n+1} - p^n)$ and of width $p^n$ by induction. Therefore, all the elements of $\Lambda_{n+1}(f)$ have valuation $1/(p^{n+1} - p^n)$ and there are $p^{n+1} - p^n$ of them. This valuation is smaller than all those of the roots of $f^{\circ n}$, so the associated segment is the last one of the Newton polygon of $f^{\circ n+1}$. Its left-hand side part comes from the induction hypothesis.
    \end{proof}

\section{The latent character}

\noindent We aim to reduce our situation to that of \cite{Deb26}, so as to reuse the constructions that are independent of the ramification assumption. To generalize the retrieving of the character without any condition on the ramification, we use as a starting point \cite[Corollary 4.1.2]{Li96a} for which the Weierstrass degree of $u(X) - X$ needs to be sufficiently large. We may always assume this. Indeed, up to replacing $u$ by one of its $\mathbb{Z}$-iterates, we may assume that
\[
    u'(0) \in 1 + p\mathbb{Z}_p.
\]
The group $u^{\circ \mathbb{Z}_p}$ is a pro-$p$-group acting on $\Lambda_1(f)$ having cardinality $p-1$, therefore $\wideg(u(X) - X) \geqslant p > 1$ and the sequence $(\wideg(u^{\circ p^n}(X) - X))_{n \geqslant 0}$ is strictly increasing, so that we may pick a $\mathbb{Z}$-iterate of $u$ satisfying the condition.

    \subsection{Retrieving a Galois character} Let $G_K$ be the absolute Galois group of $K$. The set $T_f$ of $f$-consistent sequences of elements in $\mathfrak{m}_{\mathbb{C}_p}\!$ is a $G_K$-set for which $f$ and $u$ are endomorphisms, acting by componentwise evaluation:
    \[
        T_f = \big\{(x_n)_{n \geqslant 0} \in \mathfrak{m}_{\mathbb{C}_p}^\mathbb{N}\! \;\big|\; x_0 = 0 \text{ and } \forall\, n \geqslant 0,\; f(x_{n+1}) = x_n \big\}.
    \]
    Let $S_f \subset T_f$ be the subset of the elements whose second entry is nonzero. The $n$-th coordinate of an element of $S_f$ is in $\Lambda_n(f)$ and, for all $n \geqslant 0$ and $x \in \Lambda_n(f)$, there exists an element of $S_f$ whose $n$-th coordinate is $x$. If Lubin's conjecture holds, $T_f$ is canonically isomorphic to the $p$-adic Tate module of the latent formal group, and $x \in T_f$ is a generator of $T_f$ as a free $\mathbb{Z}_p$-module of rank $1$ if and only if $x \in S_f$. A nonzero element of $T_f$ has finitely many initial zero entries followed by an element of $S_f$. We let
    \[
        \ell = \upsilon_p(u'(0) - 1) \geqslant 1.
    \]
    By \cite[Corollary 4.1.2]{Li96a}, the fixed points of $u$ are the elements of $\bigsqcup_{n = 0}^\ell \Lambda_n(f)$, and hence $u$ only fixes the $\ell+1$ first coordinates of the elements of $S_f$.

    \begin{lemma}\label{R: u-orbits}
        For all element $\pi = (\pi_n)_{n \geqslant 0} \in S_f$ and for every integer $n \geqslant 1$, one has that $\{u^{\circ m}(\pi_{\ell + n}) \;;\; m \in \mathbb{Z}_p\} = \{x \in \mathfrak{m}_{\mathbb{C}_p}\! \mid f^{\circ n}(x) = \pi_\ell\}$.
    \end{lemma}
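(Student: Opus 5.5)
The plan is to show that the $u^{\circ\mathbb{Z}_p}$-orbit of $\pi_{\ell+n}$ is contained in the fibre $\{x \mid f^{\circ n}(x)=\pi_\ell\}$, and then to compare cardinalities, showing the orbit has at least $p^n$ elements.

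\emph{The orbit lies in the fibre.} Since $u$ commutes with $f$, and $u$ fixes $\pi_\ell$ (because $\pi_\ell\in\Lambda_\ell(f)$ is a fixed point of $u$ by \cite[Corollary 4.1.2]{Li96a}), we get
\[
f^{\circ n}(u^{\circ m}(\pi_{\ell+n})) = u^{\circ m}(\pi_\ell)=\pi_\ell
\]
for $m\in\mathbb{Z}$. This passes to $m\in\mathbb{Z}_p$ by continuity, since $u^{\circ m}$ is defined as a $p$-adic limit and evaluation at a point of $\mathfrak{m}_{\mathbb{C}_p}$ is continuous; alternatively, we may restrict to $\mathbb{Z}$ and note that the orbit is finite. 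The fibre has exactly $p^n$ elements: $f^{\circ n}(X)-\pi_\ell$ has Weierstrass degree $p^n$, and its roots lie in $\Lambda_{\ell+n}(f)$, where they are simple by hypothesis.

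\emph{Lower bound on the orbit.} The group generated by $u$ acts on the finite set $\Lambda_{\ell+n}(f)$. With $u'(0)\in 1+p\mathbb{Z}_p$, this action factors through a finite $p$-group, namely a quotient of $\mathbb{Z}_p$, so every orbit has $p$-power size. We argue by induction on $n$ that the orbit of $\pi_{\ell+n}$ has size $p^n$.

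For $n=1$, $\pi_{\ell+1}$ is not fixed by $u$, again by \cite[Corollary 4.1.2]{Li96a}. Its orbit therefore has size a power of $p$ greater than $1$, contained in a fibre of size $p$, so it has size exactly $p$.

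For the inductive step, we use the iterate $u^{\circ p^{n-1}}$. By hypothesis $\upsilon_p(u'(0)-1)=\ell$, so $\upsilon_p\big((u^{\circ p^{n-1}})'(0)-1\big)=\ell+n-1$. The pair $(f,u^{\circ p^{n-1}})$ still satisfies the hypotheses, and its Weierstrass degree condition $\wideg(u^{\circ p^{n-1}}(X)-X)$ only increases. Applying \cite[Corollary 4.1.2]{Li96a} to it, its fixed points are exactly $\bigsqcup_{k\le \ell+n-1}\Lambda_k(f)$, so $u^{\circ p^{n-1}}$ does not fix $\pi_{\ell+n}$. Consequently the stabiliser of $\pi_{\ell+n}$ in the $p$-group $u^{\circ\mathbb{Z}_p}$ does not contain $u^{\circ p^{n-1}}$. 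Since the stabiliser is a closed subgroup $p^k\mathbb{Z}_p$, we must have $k\ge n$, and the orbit has size at least $p^n$. Combined with the inclusion into the fibre of size $p^n$, this gives equality, and this argument in fact makes the induction unnecessary.

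The main obstacle is justifying that \cite[Corollary 4.1.2]{Li96a} applies to $u^{\circ p^{n-1}}$, that is, that the Weierstrass degree hypothesis of that corollary is satisfied for these iterates. I would handle this through the strict increase of $\wideg(u^{\circ p^{k}}(X)-X)$ noted above. I would also need to check that $\ell$ transforms as stated; the formula $\upsilon_p(a^{p^{k}}-1)=\upsilon_p(a-1)+k$ holds for $a\in1+p\mathbb{Z}_p$ when $p$ is odd, or when $p=2$ and $\ell\ge2$. The case $p=2$, $\ell=1$ would need separate care, for instance by first passing to $u^{\circ 2}$.
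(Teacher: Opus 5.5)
Your argument is correct, apart from the case $p=2$, $\ell=1$ that you already flag (see below). It organizes the key step differently from the paper.

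\textbf{How the paper argues.} The paper inducts on $n$. From the statement for $n$ (for all $\pi$), it deduces that $u^{\circ p^n}$ fixes every point of $\Lambda_{\ell+n}(f)$, hence all roots of $f^{\circ(\ell+n)}$. It then looks at the Newton polygon of $u^{\circ p^n}(X)-X$. This polygon starts at $(1,\ell+n)$, and its total drop is already used up by the segments for $\Lambda_1(f),\dots,\Lambda_{\ell+n}(f)$, whose valuations come from Proposition \ref{R: newton-polygon-f}. So $u^{\circ p^n}$ has no other fixed points, and the orbit of $\pi_{\ell+n+1}$ cannot have size $p^n$.

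\textbf{How you argue.} You invoke Li's Corollary 4.1.2 directly for each iterate $u^{\circ p^{n-1}}$ and read off the stabiliser. This is justified by the monotonicity of $\wideg(u^{\circ p^k}(X)-X)$, which holds because $u(X)-X$ divides $u^{\circ m}(X)-X$. Your route removes the induction and any need for the valuations of $\Lambda(f)$.

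\textbf{Trade-off.} You use the black box once per iterate and must trust that its hypotheses are stable under iteration. The paper applies it only to $u$ and effectively reproves its conclusion for the iterates $u^{\circ p^n}$, from the induction hypothesis plus a Newton polygon count. Both proofs rest on the identity $\upsilon_p(u'(0)^{p^k}-1)=\ell+k$; in the paper it is hidden in the phrase ``starts at $(1,\ell+n)$''.

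\textbf{The case $p=2$, $\ell=1$.} Your caveat is well-founded, but ``separate care'' cannot rescue the lemma there, because the statement is then false. Over $\mathbb{Z}_2$ take $f(X)=(1+X)^2-1$ and $u(X)=(1+X)^3-1$. Then $\ell=1$ and $\pi_1=-2$. For $n=2$, the fibre $\{x \mid (1+x)^4=-1\}$ has four elements. The orbit of $\pi_3=\zeta_8-1$ is only $\{\zeta_8-1,\zeta_8^3-1\}$, since $3^m \bmod 8\in\{1,3\}$ for all $m\in\mathbb{Z}_2$. So this case has to be excluded upstream, by normalizing $u$ before $\ell$ is defined, e.g.\ replacing it by $u^{\circ 2}$ so that $u'(0)\in 1+4\mathbb{Z}_2$. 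That is harmless for the paper, which already replaces $u$ by iterates. The only alternative is that the Weierstrass-degree hypothesis of Li's corollary already forces $\ell\geq 2$; in this example $\wideg(u(X)-X)=2$.
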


    \begin{proof}
        We proceed by induction on $n$. Let $(\pi_n)_{n \geqslant 0} \in S_f$. Since the power series $f$ and $u$ commutes, it follows that
        \[
            \{u^{\circ m}(\pi_{\ell + 1}) \;;\; m \in \mathbb{Z}_p\} \subseteq \{x \in \mathfrak{m}_{\mathbb{C}_p}\! \mid f(x) = \pi_\ell\}.
        \]
        The second set has cardinality $p$. The first one has cardinality a power of $p$ (see the discussion on \cite[p. 333]{Lub94}) and cannot be $1$ because $u(\pi_{\ell + 1}) \neq \pi_{\ell + 1}$. These two sets are thus equals.

        Assume now that the result holds for $n \geqslant 1$ and for all $\pi \in S_f$. This means that $u^{\circ \mathbb{Z}_p}$ acts transitively on the roots of $f^{\circ n} - x$ for all $x \in \Lambda_\ell(f)$. Because $u$ is invertible and these sets have cardinality $p^n$, one must have that $u^{\circ p^n}$ acts trivially on them, hence on $\Lambda_{\ell + n}(f)$ and, therefore, on all the roots of $f^{\circ \ell + n}$. The Newton polygon of $u^{\circ p^n}(X) - X$ starts at $(1, \ell + n)$ and have a segment for each of $\Lambda_1(f), \dots, \Lambda_{\ell + n}(f)$. This shows that $u^{\circ p^n}$ can have no other fixed points. Let $(\pi_n)_{n \geqslant 0} \in S_f$, one has the inclusion
        \[
            \{u^{\circ m}(\pi_{\ell + n + 1}) \;;\; m \in \mathbb{Z}_p\} \subseteq \{x \in \mathfrak{m}_{\mathbb{C}_p}\! \mid f^{\circ n + 1} (x) = \pi_\ell\}.
        \]
        The second set has cardinality $p^{n+1}$. The first one has cardinality a power of $p$ and at least $p^n$ by induction. If it were $p^n$, then $\pi_{\ell + n +1} \in \Lambda_{\ell + n + 1}(f)$ would be a fixed point of $u^{\circ p^n}$, which cannot happen. We deduce the equality.
    \end{proof}

    \begin{remark}\label{O: dynamical-extension}
        Let $\pi = (\pi_n)_{n \geqslant 0} \in S_f$ and set $K_n = K(\pi_n)$ for all $n \geqslant 0$. This defines a tower of extensions of $K$. The previous proof shows that the extension $K_{\ell + n}/K_\ell$ is Galois for all $n \geqslant 0$. Indeed, $\pi_{\ell + n}$ is a root of the Weierstrass polynomial of $f^{\circ n} - \pi_\ell$ which is defined over $K_\ell$ and we proved that all the other roots of this polynomial are of the form $u^{\circ m}(\pi_{\ell+n})$ which converge in $K_{\ell + n}$. In particular, $\Gal(K_{\ell+n}/K_\ell)$ is cyclic, and the extension $K(\pi)/K_\ell$ is procyclic.
    \end{remark}

    For a $\pi = (\pi_n)_{n \geqslant 0} \in S_f$, we define
    \[
        S_{f, \pi} = \{(x_n)_{n \geqslant 0} \in T_f \mid x_0 = \pi_0,\dots, x_\ell = \pi_\ell\}
    \]
    which is a subset of $S_f$. Setting $j = p^\ell - p^{\ell - 1}$; any family $(\alpha_1,\dots,\alpha_j)$ of elements $\alpha_i = (\alpha_{i,n})_{n \geqslant 0}$ of $S_f$ such that $\{\alpha_{1,\ell},\dots,\alpha_{j,\ell}\} = \Lambda_\ell(f)$ then determines a partition $\{S_{f, \alpha_1},\dots, S_{f, \alpha_j}\}$ of the set $S_f$.

    \begin{proposition}\label{R: action-u}
        For all $\pi \in S_f$, the group $u^{\circ \mathbb{Z}_p}$ acts simply transitively on $S_{f, \pi}$.
    \end{proposition}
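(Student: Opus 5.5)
We will first make sense of the action, then prove transitivity from Lemma~\ref{R: u-orbits} by a compactness argument, and finally prove freeness from the structure of the fixed points of the iterates $u^{\circ p^n}$.

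\emph{Well-definedness of the action.} For $m \in \mathbb{Z}_p$, the iterate $u^{\circ m}$ is defined as the $X$-adic limit of $u^{\circ m_k}$ for integers $m_k \to m$. It still commutes with $f$. By \cite[Corollary 4.1.2]{Li96a}, $u$ fixes $\Lambda_n(f)$ for $n \leqslant \ell$, hence so does every $u^{\circ m}$. Therefore $u^{\circ m}$ maps $S_{f,\pi}$ into itself, acting componentwise. Since $u^{\circ m_1}\circ u^{\circ m_2} = u^{\circ (m_1+m_2)}$, this is a group action.

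\emph{Transitivity.} Let $x = (x_n)_{n \geqslant 0} \in S_{f,\pi}$. For each $n \geqslant 1$, set
\[
    A_n = \{m \in \mathbb{Z}_p \mid u^{\circ m}(\pi_{\ell+n}) = x_{\ell+n}\}.
\]
By Lemma~\ref{R: u-orbits} applied to $\pi$, $A_n$ is nonempty, since $f^{\circ n}(x_{\ell+n}) = x_\ell = \pi_\ell$.

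Each $A_n$ is closed. Indeed, the map $m \mapsto u^{\circ m}(\pi_{\ell+n})$ is continuous: $u^{\circ m}$ depends continuously on $m$ coefficientwise, and evaluation at a point of $\mathfrak{m}_{\mathbb{C}_p}$ with coefficients in $\mathcal{O}_K$ is continuous. More concretely, the proof of Lemma~\ref{R: u-orbits} shows that $u^{\circ p^n}$ fixes $\pi_{\ell+n}$. Hence $A_n$ is a coset of a subgroup containing $p^n\mathbb{Z}_p$, and in particular it is closed.

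The sets $A_n$ are decreasing in $n$, because applying $f$ gives $u^{\circ m}(\pi_{\ell+n}) = f\big(u^{\circ m}(\pi_{\ell+n+1})\big)$. By compactness of $\mathbb{Z}_p$, the intersection $\bigcap_n A_n$ is nonempty. Any $m$ in this intersection satisfies $u^{\circ m}(\pi) = x$.

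\emph{Freeness.} Suppose $u^{\circ m}(\pi) = \pi$ with $m \neq 0$, and write $m = p^k v$ with $v \in \mathbb{Z}_p^\times$. The stabilizer of $\pi_{\ell+k+1}$ is a closed subgroup of $\mathbb{Z}_p$ containing $m$, and it is stable under multiplication by units. Hence it contains $p^k\mathbb{Z}_p$, so $u^{\circ p^k}$ fixes $\pi_{\ell+k+1} \in \Lambda_{\ell+k+1}(f)$.

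This contradicts the fact, established in the proof of Lemma~\ref{R: u-orbits}, that the fixed points of $u^{\circ p^k}$ are exactly $\bigsqcup_{i \leqslant \ell+k}\Lambda_i(f)$. Equivalently, by Lemma~\ref{R: u-orbits} the orbit of $\pi_{\ell+k+1}$ has cardinality $p^{k+1}$, so its stabilizer is exactly $p^{k+1}\mathbb{Z}_p$, which does not contain $p^k$. Hence $m = 0$ and the action is free.

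The main obstacle is justifying the orbit and stabilizer bookkeeping. One must check that the stabilizer of $\pi_{\ell+n}$ in $\mathbb{Z}_p$ is exactly $p^n\mathbb{Z}_p$. This follows because the orbit is finite of size $p^n$ by Lemma~\ref{R: u-orbits}, and the stabilizer is a closed subgroup of $\mathbb{Z}_p$ of index $p^n$. Once this is in place, both transitivity and freeness follow formally.
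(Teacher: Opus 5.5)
Your proof is correct. For transitivity you follow the paper's route: Lemma~\ref{R: u-orbits} at each finite level, then compactness of $\mathbb{Z}_p$. You simply make the nested nonempty closed cosets $A_n$ explicit.

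For freeness you argue differently. The paper cites \cite[Corollary 4.3.1]{Lub94}, whereas you derive freeness from the orbit count of Lemma~\ref{R: u-orbits}. The orbit of $\pi_{\ell+n}$ has $p^n$ elements, so its stabilizer is a subgroup of index $p^n$ in $\mathbb{Z}_p$. It contains $p^n\mathbb{Z}_p$, which already has index $p^n$, so it equals $p^n\mathbb{Z}_p$. Hence the stabilizer of $\pi$ is $\bigcap_n p^n\mathbb{Z}_p = 0$.

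What your route buys is that the proposition becomes self-contained once the lemma is granted, and you get the exact stabilizers. These also give the closedness of $A_n$ without any continuity argument. The cost is that freeness now depends on Lemma~\ref{R: u-orbits}, while the paper's citation keeps it independent of that lemma. An even shorter self-contained argument also exists: for $m \neq 0$, $u^{\circ m}(X)-X$ has nonzero linear coefficient $u'(0)^m-1$ because $u'(0)$ is nontorsion. It therefore has only finitely many zeros in $\mathfrak{m}_{\mathbb{C}_p}$, whereas the coordinates of $\pi$ are pairwise distinct.

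Your detour through $m = p^k v$ and stability under units is redundant. The fact that $\upsilon_p(m) = k$ already says $m \notin p^{k+1}\mathbb{Z}_p$, which is the stabilizer of $\pi_{\ell+k+1}$.

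One small inaccuracy: the $\mathbb{Z}_p$-iterates are coefficientwise $\mathfrak{m}_K$-adic limits, not $X$-adic ones, since the linear coefficients $u'(0)^{m_k}$ keep changing. You use the correct notion later, so nothing breaks.
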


    \begin{proof}
        The group of $\mathbb{Z}_p$-iterates of $u$ acts transitively on the set of $n$-th coordinate of the elements of $S_{f, \pi}$, for all $n \geqslant 0$, by Lemma \ref{R: u-orbits}. Transitivity on the limit $S_f$ is guaranteed by compacity of $\mathbb{Z}_p$. Freeness follows from \cite[Corollary 4.3.1]{Lub94}.
    \end{proof}

    Set $L = K(\Lambda_\ell(f))$ to be the finite extension of $K$ generated by the roots of $f^{\circ \ell}$ or, equivalently, by the fixed points of $u$. We proceed to describe the action of the absolute Galois group $G_L$ of $L$ on $T_f$ via the dynamics of $u$. Let $\pi = (\pi_n)_{n \geqslant 0} \in S_f$. When $g \in G_L$, the element $g.\pi$ lies in $S_{f, \pi}$ and there exists a unique $\mathbb{Z}_p$-iterate $u_g$ of $u$ such that $g.\pi = u_g(\pi)$ according to Proposition \ref{R: action-u}. This association defines a map
    \[
        \chi_{f, \pi} \colon G_L \longrightarrow \mathbb{Z}_p^\times \colon g \longmapsto u_g'(0)
    \]
    which is a character, as we prove now.

    \begin{proposition}
        For all $\pi \in S_f$, the associated map $\chi_{f, \pi} \colon G_L \rightarrow \mathbb{Z}_p^\times$ is a character satisfying $g.\pi = [\chi_{f, \pi}(g)]_f(\pi)$ for all $g \in G_L$.
    \end{proposition}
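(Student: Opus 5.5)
We prove two things: that $u_g = [\chi_{f,\pi}(g)]_f$ as power series, which gives the formula $g.\pi = [\chi_{f,\pi}(g)]_f(\pi)$, and then that $\chi_{f,\pi}$ is a continuous group homomorphism.

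\emph{Identification of $u_g$.} Write $u_g = u^{\circ m}$ with $m \in \mathbb{Z}_p$. Here $u^{\circ m}$ is the $p$-adic limit of the integer iterates $u^{\circ m_k}$ with $m_k \to m$. This limit exists because $u'(0) \in 1 + p\mathbb{Z}_p$, so $u^{\circ p^n} \to \mathrm{id}$ coefficientwise (Lubin's $\mathbb{Z}_p$-iteration). Each $u^{\circ m_k}$ commutes with $f$, and commuting with $f$ is a closed condition. Hence $u^{\circ m} \in \mathscr{S}_0(\mathcal{O}_K)$ commutes with $f$. Its linear coefficient is $u'(0)^m = \chi_{f,\pi}(g)$. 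By the uniqueness in \S\ref{S: logarithm}, $u^{\circ m} = [u'(0)^m]_f$, and therefore $g.\pi = [\chi_{f,\pi}(g)]_f(\pi)$. Equivalently, $\Log_f$ turns $u^{\circ m}$ into multiplication by $u'(0)^m$. It also helps to note that $m \mapsto u'(0)^m$ is injective on $\mathbb{Z}_p$, since $u'(0)$ is nontorsion in $1+p\mathbb{Z}_p$. So $\chi_{f,\pi}(g)$ determines $u_g$, and the iterates $u^{\circ m}$ are exactly the $[a]_f$ for $a$ in the closed subgroup $u'(0)^{\mathbb{Z}_p}$.

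\emph{Multiplicativity.} Let $g, h \in G_L$. Then
\[
gh.\pi = g.(u_h(\pi)) = u_h(g.\pi) = u_h(u_g(\pi)) = (u_h \circ u_g)(\pi).
\]
The second equality holds because $g$ acts coordinatewise by field automorphisms fixing $K$. Since $u_h$ has coefficients in $\mathcal{O}_K$ and converges on $\mathfrak{m}_{\mathbb{C}_p}$, and $g$ is continuous, $g(u_h(x)) = u_h(g(x))$. Now $u_h \circ u_g$ is the $\mathbb{Z}_p$-iterate $u^{\circ(m_h + m_g)}$. By the uniqueness in Proposition~\ref{R: action-u}, $u_{gh} = u_h \circ u_g$. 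Taking linear coefficients gives $\chi_{f,\pi}(gh) = \chi_{f,\pi}(h)\chi_{f,\pi}(g)$, and $\mathbb{Z}_p^\times$ is commutative. Also $u_1 = \mathrm{id}$, so $\chi_{f,\pi}(1) = 1$.

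\emph{Continuity.} This is the step requiring the most care. Fix $n \geqslant 1$. If $g$ fixes $\pi_{\ell+n}$, i.e.\ $g \in G_{K_{\ell+n}}$, then $u_g(\pi_{\ell+n}) = \pi_{\ell+n}$. Lemma~\ref{R: u-orbits} says the orbit of $\pi_{\ell+n}$ under $u^{\circ\mathbb{Z}_p}$ has $p^n$ elements, so its stabilizer is $p^n\mathbb{Z}_p$. Hence $m_g \in p^n\mathbb{Z}_p$ and $\chi_{f,\pi}(g) \in u'(0)^{p^n\mathbb{Z}_p} \subseteq 1 + p^{n+1}\mathbb{Z}_p$. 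The open subgroups $G_{K_{\ell+n}}$ therefore map into arbitrarily small neighbourhoods of $1$, so $\chi_{f,\pi}$ is continuous.

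The main obstacle is the first step: it relies on $\mathbb{Z}_p$-iterates of $u$ being genuine power series over $\mathcal{O}_K$ commuting with $f$, which is implicitly used already in Lemma~\ref{R: u-orbits} and Proposition~\ref{R: action-u}.
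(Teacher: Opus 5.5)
Your proof is correct and follows the paper's route: multiplicativity comes from the freeness (uniqueness of $u_g$) in Proposition~\ref{R: action-u} combined with the equivariance $g(u_h(x)) = u_h(g(x))$, and the formula $g.\pi = [\chi_{f,\pi}(g)]_f(\pi)$ holds ``by construction'' once $u_g$ is identified with $[u_g'(0)]_f$ through the uniqueness of series commuting with $f$, exactly as you do. Your continuity argument is a useful addition that the paper leaves implicit; note only that $\chi_{f,\pi}$ is defined on $G_L$ alone, so the open subgroups to use are $G_L \cap G_{K_{\ell+n}}$ rather than $G_{K_{\ell+n}}$.
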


    \begin{proof}
        The fact that $\chi_{f, \pi}$ is a character is a simple consequence of the freeness of the action in Proposition \ref{R: action-u}. The rest of the statement is by construction.
    \end{proof}

    \newpage

    We choose once and for all an $f$-consistent sequence $\pi \in S_f$ and simply write
    \[
        \chi_f \colon G_L \longrightarrow \mathbb{Z}_p^\times \colon g \longmapsto \chi_{f, \pi}(g)
    \]
    its character. After Theorem \ref{main-theorem}, we will see that the subsequent constructions were independent of $\pi$. We are now in the same setting as at the end of \cite[\S2.1]{Deb26}.

    \subsection{Regularity of the character} Let $\Bcris$ and $\BdR$ be some of Fontaine's period rings (see \cite{Fon94}). Recall that $\BdR$ is a field equipped with a decreasing, exhaustive and separated filtration $(\Fil^n \BdR)_{n \in \mathbb{Z}}$ and that there exist a Frobenius $\varphi$ on $\Bcris$ and an injection $L \otimes_{L_0} \Bcris \rightarrow \BdR$, where $L_0$ is the maximal unramified extension of $\mathbb{Q}_p$ inside $L$. These two rings are equipped with a compatible action of $G_L$, for which the previous map is $G_L$-equivariant.

    A one-dimensional $p$-adic representation $(V, \rho)$ of $G_L$ is crystalline if there exists a nonzero $z \in \Bcris$ such that $g.z = \rho(g)z$ for all $g \in G_L$. Its Hodge-Tate weight is the maximal $w \in \mathbb{Z}$ such that $z \in \Fil^w \BdR$.

    \begin{proposition}
        The character $\chi_f$ of $G_L$ is crystalline of Hodge-Tate weight $1$.
    \end{proposition}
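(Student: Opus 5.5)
We follow the strategy of \cite{Deb26} and of \cite{Spe18, Ber19}: build an explicit period in $\Bcris$ on which $G_L$ acts through $\chi_f$. The only input from the present setting is the $f$-consistent sequence $\pi = (\pi_n)_{n\geqslant 0} \in S_f$ and the identity $g.\pi = [\chi_f(g)]_f(\pi)$ for $g \in G_L$, which we have just proved.

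First, we lift $\pi$ to Fontaine's ring $\widetilde{\mathbf{A}}^+ = W(\mathcal{O}_{\mathbb{C}_p^\flat})$. Since $f(X) \equiv X^{p}\cdot(\text{unit}) \bmod \mathfrak{m}_K$ up to the structure forced by $\wideg(f)=p$, the sequence $(\pi_n \bmod p)$ defines an element $\bar\pi \in \mathcal{O}_{\mathbb{C}_p^\flat}$ (after renormalizing $f$ modulo $\mathfrak{m}_K$ into a $p$-th power map, or working with $\mathcal{O}_K\otimes W$ as in \cite{Ber19}). We then use the standard Colmez-type lemma: there is a unique lift $\tilde\pi \in \mathcal{O}_K \otimes_{\mathcal{O}_{K_0}} \widetilde{\mathbf{A}}^+$ such that $\theta(\varphi^{-n}(\tilde\pi)) = \pi_n$ for all $n$ and $\varphi_q(\tilde\pi) = f(\tilde\pi)$ for the appropriate Frobenius. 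Uniqueness then implies the equivariance $g(\tilde\pi) = [\chi_f(g)]_f(\tilde\pi)$ for $g\in G_L$, provided $[\chi_f(g)]_f$ can be evaluated. This is the point where integrality matters. We know $\Log_f' \in \mathcal{O}_K[\![X]\!]$ by \cite[Lemma 2.2]{Ber19}, so $\Log_f$ converges on the relevant elements. We therefore apply $\Log_f$ directly and set
\[
z = \Log_f(\tilde\pi),
\]
which satisfies $g.z = \chi_f(g)\,z$ by $\Log_f\circ[m]_f = m\Log_f$.

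Next, we check that $z$ lies in $\Bcris$ (in fact in $\mathbf{B}^+_{\mathrm{cris}}$ tensored with $L$, then descended using the standard fact that $\Bcris^{G}$-lines suffice). The coefficients of $\Log_f$ have denominators bounded by $n$ at $X^n$ because $\Log_f'$ is integral. Moreover $\tilde\pi^{p}\in \ker\theta + p\widetilde{\mathbf{A}}^+$, by Proposition \ref{R: newton-polygon-f}: $\upsilon_p(\pi_0)$ relates to $\tilde\pi$ via $\theta(\tilde\pi)=\pi_0=0$. Together these give convergence in $\mathbf{A}_{\mathrm{cris}}[1/p]$, via $\tilde\pi^n/n! \in \mathbf{A}_{\mathrm{cris}}$ and $n!/n$ bounded up to powers of $p$ that are absorbed. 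Nonvanishing of $z$ follows since $\Log_f$ has finitely many zeros in each disk and $\tilde\pi\neq 0$.

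Finally, we compute the weight. We have $\theta(z) = \Log_f(\pi_0)=0$, so $z\in\Fil^1$. To show $z\notin\Fil^2$, we compute $\theta(d z/d t)$ or, equivalently, the image of $z$ in $\Fil^1/\Fil^2$. Since $\Log_f'(0)=1$ and $\tilde\pi\in\ker\theta$ generates a nonzero class in $\ker\theta/(\ker\theta)^2$, the class of $z$ equals that of $\tilde\pi$ up to higher-order terms in $(\ker\theta)^2$, hence is nonzero. The class of $\tilde\pi$ is nonzero because $\varphi^{-1}(\tilde\pi)$ has $\theta$-image $\pi_1\neq 0$, so $\tilde\pi$ is not divisible by $\xi^2$. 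The main obstacle is the construction of the lift $\tilde\pi$ with the Frobenius compatibility when $K$ is ramified and $f$ is not a Lubin–Tate series. For this step we plan to invoke verbatim the construction of \cite[\S2.2]{Deb26}, which we noted does not depend on the ramification hypothesis; the remaining steps are formal.
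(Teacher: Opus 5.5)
Your overall route is the one the paper has in mind when it says crystallinity comes from the properties of $\Log_f$: a period $z=\Log_f(\tilde\pi)$ built from a lift of $\pi$, equivariant because $[\chi_f(g)]_f=u_g\in\mathcal{O}_K[\![X]\!]$. Your convergence estimate is also essentially fine: $\theta(\tilde\pi)=0$ and $na_n\in\mathcal{O}_K$ for $\Log_f=\sum a_nX^n$ give $a_n\tilde\pi^n=(na_n)(n-1)!\,\tilde\pi^n/n!\to0$. Your appeal to Proposition~\ref{R: newton-polygon-f} at that point is vacuous, however.

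The genuine gap is the weight step, and the nonvanishing of $z$ rests on it too. You claim $\tilde\pi\notin(\ker\theta)^2$ because $\theta(\varphi^{-1}(\tilde\pi))=\pi_1\neq0$. This does not follow, since $\varphi^{-1}$ does not preserve $\ker\theta$. For $\xi=[p^\flat]-p$ one has $\xi^2\in(\ker\theta)^2$, while $\theta(\varphi^{-1}(\xi^2))=(p^{1/p}-p)^2\neq0$. Likewise, ``$\Log_f$ has finitely many zeros in each disk'' says nothing about whether an element of $A_{\mathrm{cris}}$ vanishes. As written, you have therefore established neither $z\neq0$ (hence not crystallinity) nor $z\notin\Fil^2\BdR$.

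What is missing is exactly the input the paper's proof names: the valuations $\upsilon_p(\pi_n)=1/(p^n-p^{n-1})$ from Proposition~\ref{R: newton-polygon-f}. To see how they enter, take the model case $K=\mathbb{Q}_p$, $f\equiv X^p \bmod p$, where $\varphi(\tilde\pi)=f(\tilde\pi)$ in $W(\mathcal{O}_{\mathbb{C}_p^\flat})$, and write $f=XQ$. Then $\tilde\pi=\varphi^{-1}(\tilde\pi)\,Q(\varphi^{-1}(\tilde\pi))$.
The first factor is a unit in $\BdR^+$, since its $\theta$-image is $\pi_1\neq0$. The second lies in $\ker\theta$, and its image in $\mathcal{O}_{\mathbb{C}_p^\flat}$ has valuation $\lim_n p^n\upsilon_p(\pi_n)-\lim_n p^n\upsilon_p(\pi_{n+1})=\frac{p}{p-1}-\frac{1}{p-1}=1$, so it generates $\ker\theta$. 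Hence $\tilde\pi$ has order exactly one, and your correct remark that $z-\tilde\pi\in\Fil^2\BdR$ then gives $z\neq0$ and weight $1$ at once.

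Note also that the lift is not available in the form you state. The hypothesis $\wideg(f)=p$ only says that $f \bmod \mathfrak{m}_K$ has $X$-adic order $p$, not that it equals $X^p$, so $(\pi_n \bmod p)_n$ need not be a compatible system of $p$-th powers. Moreover, an $\mathcal{O}_K$-linear Frobenius is $\varphi_q$, which would have to be matched with $f^{\circ r}$ (where $q=p^r$) rather than with $f$. Deferring this construction to \cite{Deb26} is in line with the paper. But the order computation above depends on the precise normalization you import, so you need to state it.
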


    \begin{proof}
        This is the same as \cite[Proposition 2.6]{Deb26}. The crystallinity follows from the properties on $\Log_f$. The determination of the weight comes from the knowledge of the valuations of the elements of $\Lambda(f)$, which coincide with those computed in Proposition \ref{R: newton-polygon-f}.
    \end{proof}

\section{The latent formal group}

\noindent The previous result reveals the presence of a formal group in the background. Tate showed in \cite{Tat67} that the height-one formal groups correspond to the height-one connected $p$-divisible groups, and they give rise to crystalline characters of weight $1$. Breuil in \cite[Theorem 5.3.2]{Bre00} when $p \neq 2$ and Kisin in \cite[Corollary 2.2.6]{Kis06} for any $p$ show that any such character arises as the $p$-adic Tate module of such a $p$-divisible group.

    \subsection{The $p$-adic Tate module} Let $H$ be a height-one formal group over $\mathcal{O}_L$ such that its $p$-adic Tate module $\T_p(H)$ satisfies
    \[
        \T_p(H) = \varprojlim_{x \mapsto [p]_H(x)} H[p^n] \simeq \mathbb{Z}_p(\chi_f)
    \]
    where $H[p^n]$ is the group of $p^n$-torsion points of $H$ in $\mathfrak{m}_{\mathbb{C}_p}\!$ for all integers $n \geqslant 0$. In particular, the $G_L$-character associated to $\T_p(H)$ is $\chi_f$. We proceed to transport the structure of $\T_p(H)$ to $T_f$ so that $f$ remains an endomorphism. For compatibility reasons, we must identify $[f'(0)]_H$ with $f$ (see \cite[\S2.5.1]{Deb26}).

    \begin{proposition}\label{R: p-adic-Tate-module}
        There exists a $\mathbb{Z}_p[G_L]$-module structure on the set $T_f$ for which it is a crystalline $G_L$-character of Hodge-Tate weight $1$ and $f \in \End_{\mathbb{Z}_p[G_L]}(T_f)$.
    \end{proposition}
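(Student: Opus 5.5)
We will build a $G_L$-equivariant bijection $\Theta\colon \T_p(H)\to T_f$ such that $\Theta\circ[f'(0)]_H = f\circ\Theta$ on sequences, and then transport the $\mathbb{Z}_p$-module structure of $\T_p(H)$ along $\Theta$. The $G_L$-action on $T_f$ is the componentwise one. So once $\Theta$ is shown to be bijective and equivariant, $T_f$ becomes a free $\mathbb{Z}_p$-module of rank one on which $G_L$ acts through $\chi_f$. This is crystalline of Hodge--Tate weight $1$ by the preceding proposition. Moreover, $f$ corresponds to $[f'(0)]_H$, which is $\mathbb{Z}_p$-linear and $G_L$-equivariant, so $f\in\End_{\mathbb{Z}_p[G_L]}(T_f)$.

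\emph{Reindexing the Tate module.} Set $\varpi = f'(0)$, a uniformizer of $\mathbb{Z}_p$, so $\varpi = p\varepsilon$ with $\varepsilon\in\mathbb{Z}_p^\times$. Because $[\varepsilon]_H$ is an automorphism of $H$ over $\mathcal{O}_L$, we have
\[
\T_p(H)\simeq\varprojlim_{x\mapsto[\varpi]_H(x)}H[\varpi^n].
\]
This is the set of $[\varpi]_H$-consistent sequences, the exact analogue of $T_f$ with $f$ replaced by $[\varpi]_H$. On this set, $[\varpi]_H$ acts by shifting: it sends $(y_n)_{n\geqslant 0}$ to $([\varpi]_H(y_n))_{n\geqslant 0} = (y_{n-1})_{n\geqslant 0}$, with the convention $y_{-1}=0$. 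Likewise $f$ acts on $T_f$ by the same shift. Consequently, any bijection between the two sets of consistent sequences that commutes with the shift automatically intertwines $[\varpi]_H$ and $f$.

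\emph{Construction of $\Theta$.} Fix a generator $\eta=(\eta_n)_{n\geqslant 0}$ of $\T_p(H)$, viewed as a $[\varpi]_H$-consistent sequence. We send $\eta$ to the chosen $\pi\in S_f$. For general $a\in\mathbb{Z}_p$, write $a=\varpi^k v$ with $v\in\mathbb{Z}_p^\times$. The element $[a]_H(\eta)$ equals $[v]_H(\eta)$ shifted $k$ times, so we define
\[
\Theta\bigl([a]_H(\eta)\bigr) = f^{\circ k}\bigl(u_v(\pi)\bigr),
\]
where $u_v$ is the unique $\mathbb{Z}_p$-iterate of $u$ with linear coefficient $v$. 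Such an iterate exists when $v$ lies in the closure $u'(0)^{\mathbb{Z}_p}$. Otherwise, we use the description of $S_f$ as the disjoint union of the $S_{f,\alpha_i}$: by Proposition~\ref{R: action-u}, $u^{\circ\mathbb{Z}_p}$ acts simply transitively on each piece $S_{f,\alpha_i}$, and $G_L$ fixes the first $\ell+1$ coordinates. We therefore choose a set-theoretic bijection on the $\ell$-th level that matches $H[\varpi^\ell]\smallsetminus H[\varpi^{\ell-1}]$ with $\Lambda_\ell(f)$. The coset $v\,u'(0)^{\mathbb{Z}_p}$ is sent to the corresponding piece $S_{f,\alpha_i}$, and within that piece we transfer the action by $u'(0)^{\mathbb{Z}_p}$ via $u$.

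\emph{Verifications.} Equivariance follows from two facts. First, $g.\eta = [\chi_f(g)]_H(\eta)$. Second, the previous proposition gives $g.\pi = u_{\chi_f(g)}(\pi)$, and $u_{\chi_f(g)}$ commutes with both $f$ and $u$. Bijectivity follows from three ingredients. The set $T_f\smallsetminus\{0\}$ is the disjoint union of the $f^{\circ k}(S_f)$. The map $f^{\circ k}$ is injective on $S_f$. Finally, the action of $u^{\circ\mathbb{Z}_p}$ on each $S_{f,\alpha_i}$ is simply transitive.

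\emph{Main obstacle.} The hardest step is making $\Theta$ well defined and compatible with the shift at the level-$\ell$ matching, which is where $u'(0)^{\mathbb{Z}_p}$ may be strictly smaller than $\mathbb{Z}_p^\times$. Two things must be checked. First, the matching of the pieces $S_{f,\alpha_i}$ must be compatible with applying $f$, so the level-$\ell$ bijection has to commute with the shift. Second, the resulting addition must not depend on the choices made. I would handle this exactly as in \cite[\S2.5.1]{Deb26}, which treats this transport independently of the ramification assumption. The one new input needed here is Proposition~\ref{R: action-u}, which replaces the transitivity statement used there.
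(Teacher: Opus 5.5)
Your overall plan, transporting the $\mathbb{Z}_p$-structure of $\T_p(H)$ along a bijection that sends $\eta$ to $\pi$ and intertwines $[f'(0)]_H$ with $f$, is the one the paper points to via \cite[Proposition 2.9]{Deb26}. However, the $G_L$-equivariance of your $\Theta$ is not established. Your two facts only give equivariance on $\bigsqcup_k f^{\circ k}(S_{f,\pi})$. On a piece $S_{f,\alpha_i}$ with $\alpha_{i,\ell}\neq\pi_\ell$, you set $\Theta([v_iw]_H(\eta))=u_w(\alpha_i)$ for $w\in u'(0)^{\mathbb{Z}_p}$, where $v_i$ represents the coset sent to that piece. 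For $g\in G_L$ you then get
\[
g.\Theta\bigl([v_iw]_H(\eta)\bigr)=u_{w\chi_{f,\alpha_i}(g)}(\alpha_i),\qquad \Theta\bigl(g.[v_iw]_H(\eta)\bigr)=u_{w\chi_{f}(g)}(\alpha_i),
\]
so you need $\chi_{f,\alpha_i}=\chi_{f,\pi}$. Nothing proved before this proposition gives that. $G_L$ fixes $\Lambda_\ell(f)$ pointwise, and Proposition~\ref{R: action-u} only gives transitivity inside each piece, not across pieces. The paper itself writes $\chi_{f,\pi}$ and asserts independence of $\pi$ only after the main theorem. A cleverer choice of $\Theta$ cannot avoid this. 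The stabilizer of $\alpha\in S_f$ in $G_L$ is $\ker\chi_{f,\alpha}$, while every nonzero element of $\T_p(H)$ has stabilizer $\ker\chi_f$. So any $G_L$-equivariant bijection $T_f\to\T_p(H)$ already forces $\ker\chi_{f,\alpha}=\ker\chi_f$ for all $\alpha\in S_f$.

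The missing ingredient is therefore an a priori comparison of the characters $\chi_{f,\alpha}$, $\alpha\in S_f$. Part of this is easy. Each $\chi_{f,\alpha}$ is crystalline of weight $1$ by the same proof, so $\chi_{f,\alpha}\chi_{f,\pi}^{-1}$ is unramified. If $\alpha_\ell=h(\pi_\ell)$ for some $h\in G_K$, then $h.\pi\in S_{f,\alpha}$ and one finds $\chi_{f,\alpha}(g)=\chi_{f,\pi}(h^{-1}gh)$. This equals $\chi_{f,\pi}(g)$, because conjugation by $G_K$ fixes $\chi_{\mathrm{cyc}}|_{G_L}$ and all unramified characters of $G_L$. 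But $G_K$ need not act transitively on $\Lambda_\ell(f)$ (e.g.\ when $\Lambda_\ell(f)\subset K$). In that case you need a genuinely new argument, for instance showing that the Frobenius on the crystalline periods attached to different $\alpha$ is the same.

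By contrast, what you call the main obstacle is not one. Since $T_f\smallsetminus\{0\}=\bigsqcup_k f^{\circ k}(S_f)$, and likewise for $\T_p(H)$, any bijection from the generators of $\T_p(H)$ onto $S_f$ extends uniquely to a shift-compatible bijection. Its equivariance reduces to equivariance on generators. Since only existence is claimed, dependence on choices is irrelevant. You should also check that $u'(0)^{\mathbb{Z}_p}=1+p^\ell\mathbb{Z}_p$, which needs $p$ odd or $\ell\geqslant 2$, so that the cosets match $\Lambda_\ell(f)$.
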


    \begin{proof}
        This is the same as \cite[Proposition 2.9]{Deb26}.
    \end{proof}

    \subsection{The main theorem} From the data of $(T_f, f)$ endowed with the structure of Proposition \ref{R: p-adic-Tate-module}, we have produced in \cite[\S3]{Deb26} a height-one connected $p$-divisible group $\Gamma_f$ defined over $\mathcal{O}_L$ for which $f$ is an endomorphism. This construction relies on applying explicit functors in integral $p$-adic Hodge theory and on tracking the behavior of $f$ along these transformations.

    \begin{theorem}\label{main-theorem}
        There exists a formal group $F\!$ over $\mathcal{O}_K$ such that $f, u \in \End_{\mathcal{O}_K}(F)$.
    \end{theorem}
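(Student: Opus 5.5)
We take as input the height-one connected $p$-divisible group $\Gamma_f$ over $\mathcal{O}_L$ produced in \cite[\S3]{Deb26} from $(T_f, f)$ with the structure of Proposition \ref{R: p-adic-Tate-module}, for which $f$ is an endomorphism. The plan has three steps: pass from $\Gamma_f$ to a formal group over $\mathcal{O}_L$, show that this formal group coincides with the formal group $F$ of \S\ref{S: logarithm}, and finally descend from $\mathcal{O}_L$ to $\mathcal{O}_K$.

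First, since $\Gamma_f$ is connected of height one over the complete local ring $\mathcal{O}_L$, Tate's equivalence \cite{Tat67} gives a one-dimensional formal group $G$ over $\mathcal{O}_L$. Its $p^\infty$-torsion is $\Gamma_f$, and it carries an $\mathcal{O}_L$-endomorphism $\tilde f$ induced by $f$. Tracking $f$ through the functors, as in \cite[\S3]{Deb26}, should give that $\tilde f$ acts on torsion points exactly as $f$ acts on $\Lambda(f)$. In particular $\tilde f$ and $f$ have the same roots and the same iterated roots, so $\tilde f = f$ as power series in the coordinate supplied by the construction. This coordinate is the one in which the torsion of $G$ is identified with $\Lambda(f)$ via $T_f$. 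Hence $f \in \End_{\mathcal{O}_L}(G)$.

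Second, $G$ is a formal group over $L$ admitting $f$ as an endomorphism, with $f'(0)$ not a root of unity. Over a field of characteristic $0$, the logarithm of such a formal group is the unique series in $X + X^2L[\![X]\!]$ satisfying $\Log(f(X)) = f'(0)\Log(X)$. By the uniqueness in \S\ref{S: logarithm}, this forces $\Log_G = \Log_f$. Therefore $G = F$, where $F(X,Y) = \Log_f^{\circ -1}(\Log_f(X) + \Log_f(Y))$ is the formal group over $K$ built from $\Log_f$, so $F$ has coefficients in $\mathcal{O}_L \cap K = \mathcal{O}_K$. The series $u$ commutes with $f$, so $\Log_f \circ u = u'(0)\Log_f$, which means $u = [u'(0)]_F$. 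Since $u$ is defined over $\mathcal{O}_K$ by hypothesis, $u \in \End_{\mathcal{O}_K}(F)$, and likewise $f \in \End_{\mathcal{O}_K}(F)$.

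The replacement of $u$ by a $\mathbb{Z}$-iterate made at the start of \S2 does not affect this conclusion. Those iterates commute with the same $f$ and have the same $\Log_f$, and the original $u$ is still $[u'(0)]_F$ with coefficients in $\mathcal{O}_K$. The same argument shows that the result does not depend on the choice of $\pi$, because $F$ is determined by $\Log_f$ alone.

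The main obstacle is the first step: one must check that the coordinate on $G$ from the Hodge-theoretic construction realises $f$ literally as the given power series, not merely up to an isomorphism defined over a larger ring. I would try to argue this way. Any $\mathcal{O}_L$-isomorphism $h$ intertwining $\tilde f$ and $f$ satisfies $\Log_f \circ h = h'(0)\,\Log_G$, so $h$ is the composition of a scaling with $\Log_f^{\circ -1} \circ \Log_G$. Transporting $G$ along $h$ then gives $F$ up to the scaling by $h'(0) \in \mathcal{O}_L^\times$, and conjugating by this unit preserves integrality. With that established, the integrality of $F$ over $\mathcal{O}_L$, and hence over $\mathcal{O}_K$, follows as above.
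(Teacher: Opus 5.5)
Your proposal is correct and follows the paper's proof. You take the formal group over $\mathcal{O}_L$ attached to $\Gamma_f$, for which $f$ is an endomorphism by the construction in [Deb26, \S3], identify it with $F$ via uniqueness of $\Log_f$ so that its coefficients lie in $\mathcal{O}_L \cap K = \mathcal{O}_K$, and conclude for $u$ because it is defined over $\mathcal{O}_K$ and commutes with $f$. One small correction: deduce $\tilde f = f$ from pointwise agreement on the infinite set $\Lambda(f)$ (a nonzero series in $\mathcal{O}_L[\![X]\!]$ has only finitely many zeros in $\mathfrak{m}_{\mathbb{C}_p}$), not from ``same iterated roots'', since $u \circ f$ has the same iterated roots as $f$ without being equal to it.
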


    \begin{proof}
        Comultiplication of the Hopf algebra of $\Gamma_f$ is a height-one formal group $F$ defined over $\mathcal{O}_L$ for which $f$ is an endomorphism. Uniqueness of the latent formal group stated in \S\ref{S: logarithm} implies that $F$ is defined over $\mathcal{O}_L \cap K = \mathcal{O}_K$. The result follows because $u$ is defined over $\mathcal{O}_K$ and commute with $f$.
    \end{proof}

\end{document}